\documentclass[a4paper,11pt]{amsart}
\pdfoutput=1 


\usepackage[T1]{fontenc} 

\usepackage{amsmath,amssymb,amsfonts,amsthm}
\usepackage{latexsym,mathrsfs}
\usepackage{graphicx}
\usepackage[all]{xy}
\input xypic
\usepackage{color}
\usepackage{hyperref}
\hypersetup{colorlinks=true,linkcolor=red,citecolor=blue}
\usepackage[OT2,T1]{fontenc}
\usepackage[utf8]{inputenc}
\usepackage{lmodern}
\usepackage{microtype}
\usepackage{enumerate}
\usepackage{tikz-cd}
\usepackage{bm}
\usepackage[normalem]{ulem}
\usepackage{mathtools}
\usepackage{tensor}
\theoremstyle{plain}
\newtheorem{theorem}[subsection]{{\bf Theorem}}
\newtheorem{proposition}[subsection]{{\bf Proposition}}

\theoremstyle{definition}

\theoremstyle{remark}

\numberwithin{equation}{subsection}
\DeclareMathOperator{\im}{im}

\DeclareMathOperator{\Ext}{Ext}

\DeclareMathOperator{\Hom}{Hom}

\DeclareMathOperator{\PSL}{PSL}
\DeclareMathOperator{\SU}{SU}

\DeclareMathOperator{\res}{res}

\DeclareBoldMathCommand{\bbot}{\bot}

\DeclareSymbolFont{cyrletters}{OT2}{wncyr}{m}{n}
\DeclareMathSymbol{\Sha}{\mathalpha}{cyrletters}{"58}

\DeclareMathOperator{\Br}{Utw}

\DeclareMathOperator{\sgn}{sgn}

\newcommand{\Z}{\mathbb{Z}}

\addtolength{\textwidth}{5mm}

\begin{document}
\title{Absence of twisting for non-trivial discrete torsion}

\author{Primo\v z Moravec}
\address{Faculty of Mathematics and Physics,\\ University of Ljubljana,\\Jadranska 21, 1000 Ljubljana, Slovenia}

\address{Institute of Mathematics, Physics and Mechanics\\Jadranska 19, 1000 Ljubljana, Slovenia}

\email{primoz.moravec@fmf.uni-lj.si}

\thanks{
The author was partially supported by ARIS, Grant numbers J1-50001, P1-0222.}

\begin{abstract}
We study discrete torsion for the $n$--torus with finite symmetry group $G$ from the
Dijkgraaf--Witten viewpoint.
A class in $H^n(G,U(1))$ assigns a phase to each flat
$G$--bundle, equivalently to each commuting $n$--tuple in $G$ up to conjugation.
We introduce the subgroup $\Br^n(G)\subseteq H^n(G,U(1))$ of \emph{untwisted} classes,
those whose Dijkgraaf--Witten phases are trivial on all commuting tuples, and derive a
universal coefficient exact sequence involving this invariant.
In degree $2$ this recovers the Bogomolov multiplier / unramified Brauer group.
We implement algorithms for computing $\Br^n(G)$ and corresponding torus
partition functions, and report on computations for families of finite subgroups of $\SU(4)$.
\end{abstract}

\maketitle
\flushbottom

\section{Introduction}
\label{sec:intro}

\noindent
Discrete torsion is one of the fundamental tools used for describing string orbifolds. It was first discovered by Vafa \cite{Vaf85} as a somewhat mysterious way of choices of phases corresponding to twisted vectors of string orbifold partition functions. These describe the effects of the orbifold group action on the $B$-field in string theory \cite{ARP01}, \cite{Sha01}. Discrete torsion also generalizes orbifold projections of D-brane probes at space-time singularities. In a series of papers, Sharpe clarified the mathematical background of discrete torsion, see \cite{Sha01,Sha02} for overview.


In a two-dimensional orbifold CFT with finite orbifold group $G$, discrete
torsion is specified by a cohomology class $[\omega] \in H^2(G,U(1))$.  The latter group is also known as the Schur multiplier of $G$. A
representative 2-cocycle $\omega \in Z^2(G,U(1))$ assigns a phase
$W_\omega(g,h)$ to each commuting pair $(g,h)$ of group elements,
which enters the one-loop (torus) orbifold partition function as
\begin{equation}
  Z^{\text{orb}}_\omega(T^2)
  \;=\;
  \frac{1}{|G|}
  \sum_{\substack{g,h \in G \\ gh = hg}}
    W_\omega(g,h)\, Z_{g,h}(T^2),
\end{equation}
where $Z_{g,h}(T^2)$ is the contribution from the $(g,h)$–twisted sector.
For the standard discrete–torsion construction one takes
\begin{equation}
  W_\omega(g,h)
  \;=\;
  \frac{\omega(g,h)}{\omega(h,g)}\,.
\end{equation}


Beyond its original CFT definition, discrete torsion admits a clean modern formulation.
It is a choice of topological coupling for a finite symmetry group $G$, encoded by a class
in $H^n(G,U(1))$, and realized by coupling the theory to the corresponding
Dijkgraaf--Witten (DW) topological gauge theory \cite{DW90}.
On a closed $n$--manifold $M$, this coupling contributes a phase obtained by evaluating
$f_A^*[\omega]$ on $[M]$, where $f_A:M\to BG$ classifies the flat $G$--bundle $A$.


A subtlety, which is easy to miss if one focuses only on the abstract group $H^n(G,U(1))$,
is that different cohomology classes can become physically indistinguishable on a given
class of backgrounds.
For the $n$--torus $T^n$, the DW weight $W_\omega(\mathbf g)$ depends only on the
evaluation of $[\omega]$ on the fundamental class through the classifying map associated
to a commuting $n$--tuple $\mathbf{g}$.
This leads to a natural notion of \emph{untwisted} discrete torsion. These are the cohomology classes
whose associated phases are trivial on \emph{all} commuting tuples.
Such classes are invisible to the torus partition function, and more generally to any
observable that factors through the DW holonomy on flat $G$--bundles on $T^n$.


The central theme of this paper is that ``invisibility on tori'' has a robust
group-theoretic meaning, and can be computed efficiently.
In degree $2$ it recovers the unramified Brauer group, also known as the Bogomolov multiplier \cite{Bog88}, and admits
a dual description which makes large-scale
computation feasible.
In higher degrees we introduce and study the groups $\Br^n(G)$ of untwisted
$n$--dimensional discrete torsion, prove a universal coefficient sequence tailored to
this setting, and explain how the obstruction to twisting can be read off from an
explicit quotient $H_{0n}(G,\mathbb Z)$ of $H_n(G,\mathbb Z)$.
This provides a homological and computational bridge between discrete torsion phases and
the topology of the moduli space of flat $G$--connections on $T^n$. We provide an implementation \cite{MGit} of the constructions in publicly available package \texttt{HAP} \cite{HAP} of the computer algebra system \texttt{GAP} \cite{GAP4}.


On the physics side, discrete torsion is often invoked as a
genuine deformation parameter of an orbifold, yet in concrete families the ``available''
torsion in $H^n(G,U(1))$ may fail to change torus amplitudes at all.
Understanding when this happens clarifies which orbifolds admit genuinely distinct
topological couplings and which ones exhibit a rigidity phenomenon.
Our results supply a systematic diagnostic for when discrete torsion is physically effective. This is precisely the complement of $\Br^n(G)$, and it 
controls whether twisting is genuinely present in torus amplitudes.


\section{Partition function of the $n$-torus}
\label{sec:ntorus}

\noindent
Let $\mathcal{T}$ be a $d$-dimensional QFT (or CFT) with finite global
symmetry group $G$, and let $[\omega] \in H^d(G,U(1))$ encode a
topological $G$-action (discrete torsion).  For a closed oriented
$d$-manifold $M$, a flat $G$-bundle $A$ on $M$ is equivalently a
homomorphism $f_A : \pi_1(M) \to G$, or a map $f_A : M \to BG$ up to
homotopy.  The Dijkgraaf--Witten topological gauge theory \cite{DW90} with action
$\omega$ has partition function
\begin{equation}
  Z_\omega^{\mathrm{DW}}(M)
  \;=\; \frac{1}{|G|}
  \sum_{[A]\ \mathrm{flat}}
    \exp\!\bigl(2\pi i\, S_\omega(A)\bigr)
  \;=\;
  \frac{1}{|G|}
  \sum_{[A]\ \mathrm{flat}}
    \bigl\langle f_A^*[\omega],[M]\bigr\rangle,
\end{equation}
where $f_A^*:H^d(G, U(1))\to H^d(\pi_1(M),U(1))$ is the map induced by $f_A$, and
 $S_\omega(A)$ is the evaluation of $f_A^*[\omega]\in H^d(M,U(1))$
on the fundamental class $[M]$.


The orbifold of $\mathcal{T}$ by $G$ with discrete torsion
$[\omega]$ is obtained by coupling $\mathcal{T}$ to this topological
DW gauge theory and summing over flat $G$-bundles.  Its partition
function on $M$ is
\begin{equation}
  Z^{\mathrm{orb}}_{\omega}(M)
  \;=\;
  \frac{1}{|G|}
  \sum_{[A]\ \mathrm{flat}}
    \bigl\langle f_A^*[\omega],[M]\bigr\rangle\,
    Z_{\mathcal{T}}(M;A),
  \label{eq:orbifold-general}
\end{equation}
where $Z_{\mathcal{T}}(M;A)$ denotes the partition function of the
original theory $\mathcal{T}$ on $M$ in the background of the flat
$G$-connection $A$.


We now specialize to $M=T^n=S^1\times\cdots\times S^1$.  A flat
$G$--bundle on $T^n$ is determined by its holonomies along the $n$ circle
factors, i.e.\ by a commuting $n$--tuple
\begin{equation}
  \mathbf{g}=(g_1,\dots,g_n)\in G^n, \qquad g_i g_j = g_j g_i.
\end{equation}
Denote
\[
X_n(G) = \{ (g_1, \dots, g_n) \in G^n \mid g_i g_j = g_j g_i 
\text{ for all } i,j \}.
\]
For a commuting tuple $\mathbf{g}=(g_1,\dots ,g_n)$, let $f_{\mathbf{g}}:T^n\to BG$ be
the corresponding classifying map.  The DW weight of this background is
\begin{equation}
  W_\omega(\mathbf{g})
  \;:=\; \big\langle f_{\mathbf{g}}^*[\omega],[T^n]\big\rangle
  \;\in U(1),
\end{equation}
so that
\begin{equation}
    \label{eq:ZDW}
  Z^{\mathrm{DW}}_\omega(T^n)
  \;=\;
  \frac{1}{|G|}
  \sum_{\mathbf{g}\in X_n(G)}
    W_\omega(\mathbf{g})
\end{equation}

and

\begin{equation}
    \label{eq:Zorb}
  Z^{\mathrm{orb}}_\omega(T^n)
  \;=\;
  \frac{1}{|G|}
  \sum_{\mathbf{g}\in X_n(G)}
    W_\omega(\mathbf{g})Z_{\mathcal{T}}(T^n; \mathbf{g}).
\end{equation}


To obtain an explicit formula for $W_\omega(\mathbf{g})$, represent
$T^n$ as the unit cube $[0,1]^n$ with opposite faces identified, and
equip it with the standard orientation $\mathrm{d}x_1\wedge\cdots\wedge
\mathrm{d}x_n$.  There is a canonical decomposition of the cube into $n!$
$n$--simplices indexed by permutations $\sigma\in S_n$:
\begin{equation}
  \Delta_\sigma \;=\;
  \big\{ 0 \le t_1 \le \cdots \le t_n \le 1 \big\}
  \ni
  x \;=\;
  t_1 e_{\sigma(1)} + \cdots + t_n e_{\sigma(n)},
\end{equation}
where $e_i$ are the standard basis vectors of $\mathbb{R}^n$.
Each $\Delta_\sigma$ inherits an orientation; its oriented volume form is
$\sgn(\sigma)\,\mathrm{d}x_1\wedge\cdots\wedge\mathrm{d}x_n$, so the
relative orientation of $\Delta_\sigma$ with respect to $T^n$ is
$\sgn(\sigma)$.


In the DW construction one chooses a gauge in which a distinguished vertex
of the triangulation is labeled by the identity of $G$, and the edge along
the $i$--th coordinate direction is labeled by the group element $g_i$.
For an oriented $n$--simplex whose edges follow the ordered sequence of
coordinate directions $(\sigma(1),\dots,\sigma(n))$, the local DW weight
is precisely
\begin{equation}
  \omega\big(g_{\sigma(1)},\dots,g_{\sigma(n)}\big)
\end{equation}
(or its inverse, depending on orientation).  Therefore, the total DW weight
for the commuting $n$--tuple $(g_1,\dots,g_n)$ is given by the product over
all $n$--simplices,
\begin{equation}
  W_\omega(g_1,\dots,g_n)
  \;=\;
  \prod_{\sigma\in S_n}
    \omega\big(g_{\sigma(1)},\dots,g_{\sigma(n)}\big)^{\sgn(\sigma)}.
  \label{eq:W-symmetric}
\end{equation}
This can be compared with \cite{HWY20}, and is the natural $n$--dimensional generalization of the familiar
two-dimensional discrete torsion phase
$\omega(g,h)/\omega(h,g)$ and of the 3-torus expression
\begin{equation}
  W_\omega(g,h,k)
  \;=\;
  \frac{\omega(g,h,k)\,\omega(h,k,g)\,\omega(k,g,h)}
       {\omega(g,k,h)\,\omega(h,g,k)\,\omega(k,h,g)},
\end{equation}
which appears as eq.\ (6.35) in \cite{DW90}.


Finally, one checks that \eqref{eq:W-symmetric} depends only on the
cohomology class $[\omega]\in H^n(G,U(1))$ and not on the choice of
representative: if $\omega'=\omega\,\delta\alpha$ with $\alpha$ an
$(n{-}1)$--cochain, then the contribution of $\delta\alpha$ to
$W_{\omega'}(g_1,\dots,g_n)$ is the evaluation of a coboundary on the
fundamental cycle $[T^n]$, which is trivial.  Equivalently, the extra
factors from $\delta\alpha$ can be organized, using the cocycle identity,
into pairs that cancel along shared $(n{-}1)$--faces of adjacent
simplices.  


We also observe that $G$ acts diagonally on the set of all commuting $n$-tuples in $G$. Conjugate tuples give the same bundle, compare this with \cite{NN08}.  So in practice one sums \eqref{eq:ZDW} and \eqref{eq:Zorb} over conjugacy class representatives of commuting $n$-tuples, with suitable weights, which can be computed by looking at stabilizer group sizes. We refer to Appendix \ref{sec:calculations} for further details.



Set
\[ 
\Br^n(G)=\{ [\omega]\in H^n(G,U(1))\mid W_\omega(\mathbf{g})=1\hbox{ for all }
\mathbf{g}\in X_n(G)\}.
\]
The set $\Br^n(G)$ is the set of all untwisted $n$-dimensional discrete torsion. We say that the group $G$ is \emph{$n$-untwisted} if and only if $\Br^n(G)=H^n(G,U(1))$. Of course, we are only interested in the case when $H^n(G,U(1))$ is non-trivial, so this will be assumed throughout the paper. Note that the untwisted groups are precisely the $2$-untwisted groups.

\section{Untwisted discrete torsion and unramified Brauer groups}
\label{sec:discrete}

\noindent
Let $\omega\in Z^2(G,U(1))$. An element $g\in G$ is said to be $\omega$-regular \cite[p. 94]{KarII} if
\[
\omega(g,h)=\omega(h,g) \hbox{ for all } h\in C_G(g).
\]
 Again, one readily sees that the definition of $\omega$-regular elements only depends on the cohomology class of $\omega$. The definition clearly implies
\[
 \Br^2 (G)=\{ [\omega]\in H^2(G,U(1))\mid \hbox{ every element of } G \hbox{ is } \omega\hbox{-regular}\}.
\] 
Davydov \cite{Dav14} denotes the right-hand side subgroup by $B(G)$.
We observe that the group $\Br^2(G)$ had been discovered independently multiple times. Bogomolov \cite{Bog88} showed that $\Br^2 (G)$ is isomorphic to the unramified Brauer group of $\mathbb{C}(V)^G/\mathbb{C}$, where $V$ is a representation of $G$. Based on that, $\Br^2(G)$ is also known as the Bogomolov multiplier of $G$. This group has applications in Noether's problem in invariant theory \cite{Noe16}. Independently, the group $\Br^2 (G)$ was also studied by Higgs \cite{Hig90}, he denoted it by $H^2_E(G,U(1))$. A more general version was described in \cite{JM18}. Very recently, Kobayashi and Watanabe \cite{KW25} described the role of the elements of $\Br^2(G)$ as labels of (1+1)D SPT phases that escape standard string-order diagnostics, and—after gauging—produce distinct gapped phases in which the non-invertible symmetry $\mathrm{Rep}(G)$ is completely broken. 


We develop a method for computing the cocycles that represent the elements of $\Br^2(G)$. We refer to Appendix \ref{sec:calculations} for further details. On the other hand, it is computationally much more suitable to use 
a dual description of the group $\Br^2 (G)$ for a given group $G$, see \cite{Mor12}. One first defines the non-abelian exterior square $G\wedge G$ of the group $G$ to be the group generated by the symbols $g\wedge h$, where $g,h\in G$, subject to the following relations:
\begin{align*}
    gg_1\wedge h &= (gg_1g^{-1} \wedge ghg^{-1}) (g\wedge h)\\
    g\wedge hh_1 &= (g\wedge h)(hgh^{-1}\wedge hh_1h^{-1})\\
    g\wedge g &=1
\end{align*}
for all $g,g_1,h,h_1\in G$. The group $G\wedge G$ was first introduced by Miller \cite{Mil52}, and later generalized by Brown and Loday \cite{Bro87} in connection with van Kampen theorem for diagrams of spaces. The commutator map  $\kappa:G\wedge G\to G$ defined via $\kappa(g\wedge h)=ghg^{-1}h^{-1}$ is a homomorphism. Denote its kernel by $M(G)$. Miller \cite{Mil52} showed that $M(G)$ is isomorphic to the second integral homology group $H_2(G,\mathbb{Z})$.  Let $M_0(G)=\langle x\wedge y\mid x,y\in G,\, xy=yx\rangle$. Then we have the following \cite{Mor12}:

\begin{theorem}
    \label{thm:bog}
    Let $G$ be a group. Then $\Br^2 (G)$ is isomorphic to $\Hom (B_0(G),U(1))$, where $B_0(G)=M(G)/M_0(G)$. When $G$ is finite, we have a (non-natural) isomorphism $\Br^2 (G)\cong B_0(G)$.
\end{theorem}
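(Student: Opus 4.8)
The plan is to dualise. I would turn the condition ``every element of $G$ is $\omega$-regular'', i.e.\ $W_\omega(x,y)=\omega(x,y)/\omega(y,x)=1$ for all commuting $x,y$, into a vanishing condition on a homomorphism out of $H_2(G,\mathbb{Z})$, and then read off which homomorphisms come from untwisted classes. First, since $U(1)$ is divisible and hence injective as a $\mathbb{Z}$-module, the universal coefficient theorem gives a \emph{natural} isomorphism $H^2(G,U(1))\cong\Hom(H_2(G,\mathbb{Z}),U(1))$ (the $\Ext$ term vanishes); write $\phi_\omega\colon H_2(G,\mathbb{Z})\to U(1)$ for the homomorphism attached to $[\omega]$. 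Second, by Miller's theorem $\ker\kappa=M(G)$ carries a natural isomorphism $M(G)\cong H_2(G,\mathbb{Z})$. If $x,y\in G$ commute then $\kappa(x\wedge y)=[x,y]=1$, so $x\wedge y\in M(G)$ and determines a class $h_{x,y}\in H_2(G,\mathbb{Z})$; since $M_0(G)$ is by definition generated by these $x\wedge y$, the isomorphism $M(G)\cong H_2(G,\mathbb{Z})$ carries $M_0(G)$ onto $\langle h_{x,y}\mid xy=yx\rangle$ and identifies $B_0(G)=M(G)/M_0(G)$ with $H_2(G,\mathbb{Z})/\langle h_{x,y}\rangle$.

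The heart of the argument is the evaluation identity
\[
\phi_\omega(h_{x,y})=\frac{\omega(x,y)}{\omega(y,x)}=W_\omega(x,y)\qquad\text{for every commuting pair }x,y .
\]
I would prove this by naturality: restricting along the inclusion of the abelian subgroup $A=\langle x,y\rangle$ and using that both the universal coefficient isomorphism and Miller's isomorphism are natural, the claim reduces to the case of a finitely generated abelian group $A$, where $H_2(A,\mathbb{Z})\cong\wedge^2 A$, the class $h_{x,y}$ becomes the exterior product $x\wedge y$, and the homomorphism attached to $[\omega]\in H^2(A,U(1))$ is the classical commutator (antisymmetrisation) pairing $(x,y)\mapsto\omega(x,y)\omega(y,x)^{-1}$, i.e.\ exactly the two-dimensional discrete torsion phase. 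Alternatively one can argue directly from the Hopf formula $H_2(G,\mathbb{Z})\cong(R\cap[F,F])/[F,R]$ for a free presentation $1\to R\to F\to G\to 1$: lift $x,y$ to $\tilde x,\tilde y\in F$, note $[\tilde x,\tilde y]\in R\cap[F,F]$ represents $h_{x,y}$, write $\pi^{*}[\omega]=\delta\tau$ (possible since $H^2(F,U(1))=0$), and pair to recover $\omega(x,y)/\omega(y,x)$.

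Granting the evaluation identity, both isomorphisms follow by diagram chasing. Using the description of $\Br^2(G)$ recalled in the text, $[\omega]\in\Br^2(G)$ iff $W_\omega(x,y)=1$ for all commuting $x,y$, iff $\phi_\omega$ vanishes on every $h_{x,y}$, iff $\phi_\omega$ vanishes on $M_0(G)$, iff $\phi_\omega$ factors through $B_0(G)$. Since $\Hom(B_0(G),U(1))\hookrightarrow\Hom(H_2(G,\mathbb{Z}),U(1))\cong H^2(G,U(1))$ has image precisely the homomorphisms killing $M_0(G)$, the assignment $[\omega]\mapsto\phi_\omega$ restricts to a group isomorphism $\Br^2(G)\xrightarrow{\ \sim\ }\Hom(B_0(G),U(1))$. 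When $G$ is finite the Schur multiplier $H_2(G,\mathbb{Z})$ is finite (finitely generated and annihilated by $|G|$ via the transfer), so $B_0(G)$ is a finite abelian group, and then $\Hom(B_0(G),U(1))=\Hom(B_0(G),\mathbb{Q}/\mathbb{Z})$ is the Pontryagin dual, which is non-canonically isomorphic to $B_0(G)$; hence $\Br^2(G)\cong B_0(G)$.

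The main obstacle is the evaluation identity in the second paragraph: one has to make Miller's isomorphism $M(G)\cong H_2(G,\mathbb{Z})$ concrete enough to pair the class of a commuting pair against a $2$-cocycle and land exactly on the naive commutator $\omega(x,y)/\omega(y,x)$, matching orientation conventions with the definition of $W_\omega$. Everything else --- universal coefficients, naturality, finiteness of the Schur multiplier, and Pontryagin duality --- is formal.
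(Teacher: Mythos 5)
Your argument is correct. The paper itself does not prove Theorem \ref{thm:bog} (it is quoted from \cite{Mor12}), but your route --- kill the $\Ext$ term by divisibility of $U(1)$, identify $[\omega]$ with $\phi_\omega\in\Hom(H_2(G,\mathbb Z),U(1))$, and show via Miller's/Hopf's identification that $x\wedge y$ for a commuting pair maps to the toral class $[x|y]-[y|x]$ so that $\phi_\omega(h_{x,y})=\omega(x,y)/\omega(y,x)$ --- is exactly the argument of the cited reference, and it is the degree-$2$ instance of the mechanism the paper later formalizes in Theorem \ref{thm:UCTuntwist} (where $M_0(G)$ becomes $Z_{0n}$ and $B_0(G)$ becomes $H_{0n}(G,\mathbb Z)$). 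You correctly isolate the only non-formal step, the evaluation identity, and both of your proposed proofs of it (naturality reduction to $\langle x,y\rangle$ abelian, or the Hopf-formula computation $[\tilde x,\tilde y]\mapsto\omega(x,y)\omega(y,x)^{-1}$) are standard and sound.
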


 Theorem \ref{thm:bog} provides a description of the isomorphism type of $\Br^2 (G)$, which is suitable for efficient computations. An algorithm for computing $B_0(G)$ was first developed in \cite{Mor12}, and subsequently improved in \cite{JM18} for the case of finite solvable groups. An independent implementation is a part of the homological algebra package \texttt{HAP} \cite{HAP} of the computer algebra system \texttt{GAP} \cite{GAP4}.


It is clear  that $G$ admits an untwisted discrete torsion if and only if $B_0(G)$ is non-trivial. It turns out \cite{JS24} that these groups are, contrary to the previous folk-lore belief, quite abundant.
Furthermore, $G$ is an untwisted group if and only if $B_0(G)=M(G)$. Groups $G$ with the latter property were also considered by Cameron and Kuzma \cite{CK23} in connection with relationship between commuting and deep commuting graphs of groups. 

It is known \cite{Kang} that if $G_1$ and $G_2$ are finite groups, then $\Br^2(G_1\times G_2)\cong \Br^2(G_1)\times \Br^2(G_2)$. There is a well established fact \cite[p. 46]{KarII} that $H^2(G_1\times G_2,U(1))\cong H^2(G_1,U(1))\times H^2(G_2,U(1))\times (G_1\otimes G_2)$, where $G_1\otimes G_2$ is the tensor product of the largest abelian quotients of $G_1$ and $G_2$. We conclude the following:

\begin{proposition}
  \label{prop:direct}
  Let $G_1$ and $G_2$ be untwisted groups of coprime orders. Then $G_1\times G_2$ is also untwisted.
\end{proposition}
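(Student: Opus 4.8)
The plan is to reduce the statement to a counting argument built on the two product formulas already recalled. By definition $\Br^2(G_1\times G_2)$ is a \emph{subgroup} of $H^2(G_1\times G_2,U(1))$, and the latter group is finite (it is the Schur multiplier of a finite group). Hence it suffices to prove that $|\Br^2(G_1\times G_2)| = |H^2(G_1\times G_2,U(1))|$: a subgroup of a finite group having full order must be the whole group, and that is exactly the assertion that $G_1\times G_2$ is untwisted.

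For the left-hand count I would use Kang's isomorphism $\Br^2(G_1\times G_2)\cong \Br^2(G_1)\times\Br^2(G_2)$, so that $|\Br^2(G_1\times G_2)| = |\Br^2(G_1)|\cdot|\Br^2(G_2)|$. Since $G_1$ and $G_2$ are untwisted we have $\Br^2(G_i)=H^2(G_i,U(1))$, and therefore $|\Br^2(G_1\times G_2)| = |H^2(G_1,U(1))|\cdot|H^2(G_2,U(1))|$. For the right-hand count I would invoke $H^2(G_1\times G_2,U(1))\cong H^2(G_1,U(1))\times H^2(G_2,U(1))\times (G_1\otimes G_2)$, where $G_1\otimes G_2$ denotes the tensor product of the abelianizations $G_1^{\mathrm{ab}}$, $G_2^{\mathrm{ab}}$. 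This is the only place the coprimality hypothesis is used: the order of $G_i^{\mathrm{ab}}$ divides $|G_i|$, so $\gcd(|G_1^{\mathrm{ab}}|,|G_2^{\mathrm{ab}}|)=1$, and the tensor product over $\Z$ of two finite abelian groups of coprime orders is trivial. Thus $G_1\otimes G_2=0$ and $|H^2(G_1\times G_2,U(1))| = |H^2(G_1,U(1))|\cdot|H^2(G_2,U(1))|$, which matches the left-hand count. Combining the two equalities gives $\Br^2(G_1\times G_2)=H^2(G_1\times G_2,U(1))$, as desired.

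The one subtlety worth flagging is that Kang's isomorphism is a priori only an abstract group isomorphism and is not claimed to be compatible with the embeddings $\Br^2(G_i)\hookrightarrow H^2(G_i,U(1))$; this is precisely why the argument is routed through cardinalities rather than through an identification of $\Br^2(G_1\times G_2)$ with an internal direct product of subgroups of $H^2(G_1\times G_2,U(1))$. Everything else — finiteness of the Schur multiplier, vanishing of the tensor term — is routine. One could alternatively argue dually via Theorem \ref{thm:bog}, using the Künneth splitting of $H_2(-,\Z)$ for coprime factors together with the fact that $M_0$ respects the product decomposition, but the counting argument above is the shortest route.
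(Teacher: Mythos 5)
Your proof is correct and follows essentially the same route the paper intends: it combines Kang's product formula for $\Br^2$ with the decomposition $H^2(G_1\times G_2,U(1))\cong H^2(G_1,U(1))\times H^2(G_2,U(1))\times (G_1\otimes G_2)$ and uses coprimality to kill the tensor term. Your extra care in routing the argument through cardinalities, to sidestep the fact that Kang's isomorphism is only abstract, is a sensible refinement of the paper's terse ``we conclude'' and introduces no gap.
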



We report on examples of untwisted groups in Section \ref{sec:examples}. 
As another example, we consider finite subgroups $G$ of $\SU(n)$, $n=2,3$. We refer to the notation of \cite{Lud11}, and we note that the calculations of $H^2(G,U(1))$ done in \cite{F}. Here we show that the subgroup $\Br^2(G)$ is always trivial.

If $G$ is a finite subgroup of $\SU(2)$, then $H^2(G,U(1))=1$ \cite[Corollary 1]{F}. So we consider finite subgroups of $\SU(3)$. Here we apply the following facts \cite{Bog88,Mor12}:

\begin{proposition}
  \label{prop:bog}
  Let $G$ be a finite group.
  \begin{enumerate}
    \item If $G$ is abelian, then $\Br^2(G)$ is trivial.
    \item If $S$ is a Sylow $p$-subgroup of $G$, then the $p$-part of $\Br^2(G)$ embeds into $\Br^2(S)$.
    \item If $G$ has an abelian normal subgroup $A$ with $G/A$ cyclic, then $\Br^2(G)$ is trivial.
    \item If $G$ is a group of order $2^n$ with $\Br^2(G)$ non-trivial, then $n\ge 6$.
    \item If $G$ is a group of order $p^n$, $p>2$, with $\Br^2(G)$ non-trivial, then $n\ge 5$.
  \end{enumerate}
\end{proposition}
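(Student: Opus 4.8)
The plan is to use throughout the dual description $\Br^2(G)\cong\Hom(B_0(G),U(1))$ with $B_0(G)=M(G)/M_0(G)$ of Theorem~\ref{thm:bog}, together with the characterization of $\Br^2(G)$ by $\omega$-regular elements. For (1): if $G$ is abelian then every generating symbol $x\wedge y$ of $G\wedge G$ already satisfies $xy=yx$, so $M_0(G)=G\wedge G\supseteq M(G)$, and hence $B_0(G)=1$. For (2): since $\cor\circ\res=[G:S]$ on $H^2(G,U(1))$ and $[G:S]$ is prime to $p$, the restriction map $\res\colon H^2(G,U(1))\to H^2(S,U(1))$ is injective on $p$-primary parts; moreover $\res$ carries $\Br^2(G)$ into $\Br^2(S)$, because if every element of $G$ is $\omega$-regular and $s\in S$, then for each $h\in C_S(s)\subseteq C_G(s)$ we get $\omega(s,h)=\omega(h,s)$, so $s$ is $\res(\omega)$-regular. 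Hence the $p$-part of $\Br^2(G)$ embeds into $\Br^2(S)$.

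For (3), fix $t\in G$ with $G=\langle A,t\rangle$ and $G/A=\langle tA\rangle$, put $Q=(G\wedge G)/M_0(G)$ with quotient map $\pi$, and note that $\kappa$ annihilates $M_0(G)$ and so induces $\bar\kappa\colon Q\to G$ with $\ker\bar\kappa=B_0(G)$. Using the two bilinearity relations defining $G\wedge G$ and the commutativity of $A$, I would verify: (i) $Q$ is generated by the elements $\pi(a\wedge t)$, $a\in A$ — the symbols $a\wedge a'$ (for $a,a'\in A$) and $t^i\wedge t^j$ lie in $M_0(G)$, and a short induction on the relations rewrites any symbol $at^i\wedge bt^j$ modulo $M_0(G)$ as a product of symbols $a'\wedge t$; (ii) the map $\bar\phi\colon A\to Q$, $a\mapsto\pi(a\wedge t)$, satisfies $\bar\phi(aa')=\bar\phi(a')\bar\phi(a)$ — a one-line computation using $ata^{-1}=tc$ with $c=(t^{-1}at)a^{-1}\in A$ — so $\bar\phi(A)$ is a subgroup of $Q$, hence equals $Q$ by (i); (iii) $\bar\kappa(\pi(a\wedge t))=[a,t]$, and if $[a,t]=1$ then $a$ and $t$ commute, so $a\wedge t\in M_0(G)$ and $\pi(a\wedge t)=1$. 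Given $z\in B_0(G)$, write $z=\bar\phi(a)$ by (ii); then $[a,t]=\bar\kappa(z)=1$, so $z=\pi(a\wedge t)=1$ by (iii). Thus $B_0(G)=1$. (This is Bogomolov's lemma for abelian normal subgroups with cyclic quotient; one can also read it off the Lyndon--Hochschild--Serre sequence, where $H_2(G/A,\Z)=0$ since $G/A$ is cyclic and the remaining contributions to $H_2(G,\Z)$ are carried by commuting elements.)

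For (4) and (5), by (2) it suffices to treat $p$-groups $G$. When $|G|\le p^4$ the group $G$ has an abelian normal subgroup with cyclic quotient — it is abelian if $|G|\le p^2$, and for $|G|\in\{p^3,p^4\}$ it has an abelian normal subgroup of index $p$ — so (3) gives $B_0(G)=1$; this proves (5) and proves (4) for $n\le 4$. The one remaining possibility is $|G|=2^5$. Here one still has an abelian normal subgroup of order $2^3$, but its quotient of order $4$ may fail to be cyclic, so the finitely many groups of order $32$ that admit no abelian normal subgroup with cyclic quotient — in particular the extraspecial groups of order $32$ — must be treated individually: one computes $M(G)$ from the Hopf formula $M(G)\cong(R\cap[F,F])/[F,R]$ for a free presentation $G=F/R$ and checks that $M(G)=M_0(G)$, i.e.\ that every homology class is represented by a commuting pair; equivalently one appeals to the verified computation of \cite{Mor12}, reproducible with the \texttt{HAP} implementation.

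The hard part will be this last step: there is no uniform structural argument for the order-$2^5$ groups lying outside the hypothesis of (3), so they must be handled one at a time by an explicit exterior-square/Hopf-formula computation (or a machine-checked one), and the same phenomenon would reappear if one tried to push the bound further. Showing that the bounds are sharp — by exhibiting one group of order $p^5$ for odd $p$, and one of order $2^6$, with non-trivial Bogomolov multiplier — is a separate matter, requiring only the exhibition of suitable explicit examples.
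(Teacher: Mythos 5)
Your proposal is correct in substance, but note that the paper does not prove Proposition~\ref{prop:bog} at all: it imports all five items as known facts from \cite{Bog88} and \cite{Mor12}, so there is no in-text argument to compare against. What you have written is essentially a faithful reconstruction of the standard proofs, recast in the exterior-square language of Theorem~\ref{thm:bog}. Items (1) and (2) check out: the transfer identity $\cor\circ\res=[G\colon S]$ gives injectivity of $\res$ on $p$-primary parts, and $\omega$-regularity visibly restricts to subgroups, so the $p$-part of $\Br^2(G)$ lands injectively in $\Br^2(S)$. Your proof of (3) is Bogomolov's lemma in Moravec's formulation; the three steps (reduction of all symbols modulo $M_0(G)$ to the set $\{\pi(a\wedge t)\}$ via the two expansion relations, the anti-homomorphism property of $a\mapsto\pi(a\wedge t)$ using normality of $A$ to write $ata^{-1}=tc$ and then absorbing $({}^{t}a'\wedge {}^{t}c)$ into $M_0(G)$, and the final step through $\bar\kappa$) are all sound. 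The deduction of (5), and of (4) for $n\le 4$, is also correct, since every nonabelian group of order $p^3$ or $p^4$ does have an abelian normal subgroup of index $p$ (centralize a normal abelian subgroup of order $p^2$ and note that the centralizer has center of index at most $p$). The one place where your argument is not self-contained is the order-$2^5$ case of (4): you rightly observe that (3) fails to cover, e.g., the extraspecial groups of order $32$, and you fall back on a finite case-by-case Hopf-formula/exterior-square computation. That is in fact how this case is settled in the literature as well (Chu--Kang's retract rationality for groups of order $2^5$, or the explicit computations underlying \cite{Mor12}), so I would count it as an unavoidable appeal to a finite verification rather than a gap; and you are right that sharpness of the bounds is not part of the statement.
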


In Table \ref{tab:su3} we list the families of finite subgroups of $\SU(3)$ as in \cite{Lud11}, with reasons implying the triviality of the Bogomolov multiplier. Note that if $G$ is a group of order $2^{n_0}3^{n_1}\cdots$ and if $n_0<6$ and $n_i<5$ for all $i\ge 1$, then the items 2, 4 and 5 of Proposition \ref{prop:bog} give the result. This quickly rules out the last seven rows of the table. The remaining non-obvious cases are the families $C(n,a,b)$ and $D(n,a,b;d,r,s)$. The former can be decomposed as $C(n,a,b)\cong (\mathbb{Z}/m_1\times \mathbb{Z}/m_2)\rtimes \mathbb{Z}/3$ for some $m_1$ and $m_2$ depending on $n$, $a$ and $b$, see \cite{Lud11}. Therefore, $C(n,a,b)$ has an abelian normal subgroup with cyclic quotient, hence $\Br^2(C(n,a,b))$ is trivial by Proposition \ref{prop:bog}, 3. Similarly, we have $D(n,a,b;r,s)\cong (\mathbb{Z}/n_1\times \mathbb{Z}/n_2)\rtimes S_3$ for some $n_1$ and $n_2$. From here we deduce that all Sylow $p$-subroups of $D(n,a,b;r,s)$ are either abelian, or of the form $A_{(2)}\rtimes \mathbb{Z}/2$ or  $A_{(3)}\rtimes \mathbb{Z}/3$, where $A_{(q)}$ is an abelian $q$-group. This shows the result by Proposition \ref{prop:bog}, 1, 2 and 3.

\begin{table}
    \centering
    \begin{tabular}{|l|l|}
      \hline
      Group & Reason for triviality of $\Br^2(G)$\\
      \hline
      $\mathbb{Z}/m\oplus\mathbb{Z}/n$ &  Proposition \ref{prop:bog}, 1\\
      Finite subgroups of $SU(2)$ & \cite{F}\\
      $C(n,a,b)$ & Proposition \ref{prop:bog}, 3\\
      $D(n,a,b;d,r,s)$ & Proposition \ref{prop:bog}, 1, 2, 3\\
      $\Sigma(60)\cong A_5$ & Proposition \ref{prop:bog}, 2, 4, 5\\
      $\Sigma(60)\times \mathbb{Z}/3$ & Proposition \ref{prop:bog}, 2, 4, 5\\
      $\Sigma(168)\cong\PSL(2,7)$ & Proposition \ref{prop:bog}, 2, 4, 5\\
      $\Sigma(36\times 3)$ & Proposition \ref{prop:bog}, 2, 4, 5\\
      $\Sigma(72\cdot 3)$ & Proposition \ref{prop:bog}, 2, 4, 5\\
      $\Sigma(216\cdot 3)$ & Proposition \ref{prop:bog}, 2, 4, 5\\
      $\Sigma(360\cdot 3)$ & Proposition \ref{prop:bog}, 2, 4, 5\\
      \hline 
    \end{tabular}
     \caption{\label{tab:su3} Finite subgroups of $\SU(3)$ according to \cite{Lud11}.}
\end{table}


\section{Higher-dimensional discrete torsion}
\label{sec:highdim}

\noindent
Here we develop the machinery for computing the groups $\Br^n(G)$. In order to do this, we first consider general cohomology of $G$ with coefficients in a trivial $G$-module $A$. In general, we write the operation in $A$ additively, unless when $A=U(1)$; in that case we use the multiplicative notation. 
Denote by $F_\bullet\to\mathbb Z$ the normalized bar resolution of $\mathbb Z$ as a left $\mathbb ZG$--module, and set
\[
C_\bullet=C_\bullet(G):=\mathbb Z\otimes_{\mathbb ZG}F_\bullet
\qquad (C_0=\mathbb Z).
\]
For $g_1,\dots,g_n\in G$ we identify the element $1\otimes [g_1|\cdots|g_n]\in C_n$ with $[g_1|\cdots|g_n]$ and suppress the tensor sign.
We have the differential maps $\partial_n:C_n\to C_{n-1}$ defined by
\[ 
  \partial_n[g_1|\ldots |g_n] = [g_2|\ldots|g_n]
  +\sum_{i=1}^{n-1}(-1)^i[g_1|\ldots |g_ig_{i+1}|\ldots |g_n]
  + (-1)^n[g_1|\ldots |g_{n-1}].
\]
Write $Z_n=\ker\partial_n$ and $B_n=\im\partial_{n+1}$. Then we define the $n$-th integral homology group of $G$ by $H_n(G,\mathbb{Z})=H_n(C_\bullet)=Z_n/B_n$. 


The differential $\partial_{n+1}$ induces the map $\delta^n:\Hom(C_{n},A)\to \Hom(C_{n+1},A)$. Denote by $Z^n=Z^n(G,A)=\ker\delta^n$ the set of $n$-cocycles, and by $B^n=B^n(G,A)=\im\delta^{n-1}$ the set of $n$-coboundaries.
The $n$-th cohomology group of $G$ with coefficients in $A$ is defined by  $H^n(G,A)=H^n(\Hom(C_\bullet ,A))=Z^n/B^n$. 


Now we define a more general version of the $\Br^n$ group by 
\begin{multline*}
  \Br^n(G,A)=\{[\omega]\in  H^n(G,A)\mid \sum_{\sigma\in S_n}\sgn(\sigma)\cdot \omega(g_{\sigma(1)},\dots ,g_{\sigma (n)})=0 \\\hbox{ for all } (g_1,\dots, g_n)\in X_n(G)\}.
\end{multline*}
Note that $\Br^n(G)=\Br^n(G,U(1))$. We prove the following version of Universal Coefficient Theorem \cite[p. 60]{Bro82} for the $\Br^n$ groups:

\begin{theorem}
  \label{thm:UCTuntwist}
  Let $G$ be a group and $A$ a trivial $G$-module. Then there is a short exact sequence
  \begin{equation}
    0 \longrightarrow 
\Ext_{\mathbb Z}^{1}\!\bigl(H_{n-1}(G,\mathbb Z),A\bigr)
\ \xrightarrow{\ \iota\ }\ 
\Br^{n}(G,A)
\ \xrightarrow{\ \tilde{\varrho}\ }\ 
\Hom\!\bigl(H_{0n}(G,\mathbb Z),A\bigr)
\longrightarrow 0 ,
\label{eq:uctuntwist}
\end{equation}
with $H_{0n}(G,\mathbb Z)$ being a quutient of $H_{n}(G,\mathbb Z)$ defined as
$H_{0n}(G,\mathbb Z)=Z_n/(Z_{0n}+B_n)$, where
\[ Z_{0n}=\left\langle \sum_{\sigma\in S_n}
\sgn\sigma\cdot [g_{\sigma(1)}|\dots |g_{\sigma(n)}]\mid (g_1,\dots ,g_n)\in X_n(G)
\right\rangle .
\]
The sequence \eqref{eq:uctuntwist} splits (non-canonically).
\end{theorem}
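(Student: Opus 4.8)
The plan is to derive \eqref{eq:uctuntwist} from the ordinary algebraic Universal Coefficient Theorem \cite[p. 60]{Bro82} for the chain complex $C_\bullet=C_\bullet(G)$, whose terms are free abelian groups (the normalized bar chains), by exhibiting $\Br^n(G,A)$ as a preimage inside $H^n(G,A)$. The classical UCT produces a (non-canonically split) short exact sequence
\begin{equation*}
0\longrightarrow \Ext_{\mathbb Z}^{1}\bigl(H_{n-1}(G,\mathbb Z),A\bigr)\xrightarrow{\ \iota\ } H^n(G,A)\xrightarrow{\ \varrho\ }\Hom\bigl(H_n(G,\mathbb Z),A\bigr)\longrightarrow 0 ,
\end{equation*}
in which $\varrho$ carries the class of a cocycle $\omega\in Z^n(G,A)$ to the homomorphism $[z]\mapsto\omega(z)$ on $H_n(G,\mathbb Z)=Z_n/B_n$; this is well defined because $\omega$ annihilates $B_n$, and replacing $\omega$ by $\omega+\delta^{n-1}\beta$ changes $\omega|_{Z_n}$ by the map $z\mapsto\beta(\partial_n z)$, which vanishes on $Z_n$. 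In particular $\im\iota=\ker\varrho$, and the sequence splits non-canonically.

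First I would record the only genuinely new ingredient: that $Z_{0n}\subseteq Z_n$, so that $H_{0n}(G,\mathbb Z)=Z_n/(Z_{0n}+B_n)$ really is a quotient of $H_n(G,\mathbb Z)$. For a commuting tuple $\mathbf{g}=(g_1,\dots,g_n)$ set $\alpha(\mathbf{g})=\sum_{\sigma\in S_n}\sgn(\sigma)\,[g_{\sigma(1)}|\cdots|g_{\sigma(n)}]\in C_n$. One checks $\partial_n\alpha(\mathbf{g})=0$ directly: the ``inner'' summands $(-1)^i[\cdots|g_{\sigma(i)}g_{\sigma(i+1)}|\cdots]$ of $\partial_n\alpha(\mathbf{g})$ cancel in pairs under $\sigma\mapsto\sigma\cdot(i\ i{+}1)$, using that $g_{\sigma(i)}$ and $g_{\sigma(i+1)}$ commute, while the leading summand $[g_{\sigma(2)}|\cdots|g_{\sigma(n)}]$ cancels the trailing summand $(-1)^n[g_{\sigma(1)}|\cdots|g_{\sigma(n-1)}]$ under $\sigma\mapsto\sigma\cdot(1\,2\,\cdots\,n)$, with the signs working out because $\sgn(1\,2\,\cdots\,n)=(-1)^{n-1}$. (Equivalently, $\alpha(\mathbf{g})$ represents the image under the classifying map $f_{\mathbf{g}}\colon T^n\to BG$ of a generator of $H_n(T^n,\mathbb Z)$, hence is a cycle.) Thus $Z_{0n}=\langle\alpha(\mathbf{g})\mid\mathbf{g}\in X_n(G)\rangle\subseteq Z_n$; writing $q\colon H_n(G,\mathbb Z)\twoheadrightarrow H_{0n}(G,\mathbb Z)$ for the projection and $\bar{Z}_{0n}=\ker q$, precomposition with $q$ identifies $\Hom(H_{0n}(G,\mathbb Z),A)$ with the subgroup $\{\psi\in\Hom(H_n(G,\mathbb Z),A)\mid \psi|_{\bar{Z}_{0n}}=0\}$ of $\Hom(H_n(G,\mathbb Z),A)$.

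The heart of the proof is the identification
\[
\Br^n(G,A)=\varrho^{-1}\bigl(\Hom(H_{0n}(G,\mathbb Z),A)\bigr).
\]
Indeed, for a cocycle $\omega$ the homomorphism $\varrho([\omega])$ vanishes on $\bar{Z}_{0n}$ if and only if $\omega$ vanishes on $Z_{0n}$ (it already vanishes on $B_n$), if and only if $\sum_{\sigma}\sgn(\sigma)\,\omega(g_{\sigma(1)},\dots,g_{\sigma(n)})=\omega(\alpha(\mathbf{g}))=0$ for every $\mathbf{g}\in X_n(G)$, which is exactly the condition defining $\Br^n(G,A)$. Since $\im\iota=\ker\varrho\subseteq\varrho^{-1}(\Hom(H_{0n}(G,\mathbb Z),A))=\Br^n(G,A)$, restricting the classical sequence to this preimage gives \eqref{eq:uctuntwist}: the map $\iota$ there is the classical one with codomain cut down to $\Br^n(G,A)$, still injective; the map $\tilde\varrho$ is $\varrho$ restricted to $\Br^n(G,A)$, regarded as landing in $\Hom(H_{0n}(G,\mathbb Z),A)$, and $\ker\tilde\varrho=\ker\varrho\cap\Br^n(G,A)=\ker\varrho=\im\iota$; and $\tilde\varrho$ is surjective because $\varrho$ is, each $\phi\in\Hom(H_{0n}(G,\mathbb Z),A)$ lifting to some $[\omega]$ with $\varrho([\omega])=\phi\circ q$, which then lies in $\Br^n(G,A)$ and satisfies $\tilde\varrho([\omega])=\phi$. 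Finally, if $s\colon\Hom(H_n(G,\mathbb Z),A)\to H^n(G,A)$ splits $\varrho$, then $\phi\mapsto s(\phi\circ q)$ takes values in $\Br^n(G,A)$ (because $\phi\circ q$ kills $\bar{Z}_{0n}$) and splits $\tilde\varrho$, yielding the asserted non-canonical splitting.

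The main point demanding actual work is the cycle identity $\partial_n\alpha(\mathbf{g})=0$, on which the definition of $H_{0n}(G,\mathbb Z)$ rests; after that the argument is formal. The care needed is organizational: keeping straight the nested chains $B_n\subseteq Z_{0n}+B_n\subseteq Z_n$ on the homology side against $\im\iota\subseteq\Br^n(G,A)\subseteq H^n(G,A)$ on the cohomology side, and verifying that $\varrho$ and its chosen splitting restrict compatibly. No homotopy-theoretic or analytic input is needed beyond the algebraic UCT for a complex of free abelian groups. As a consistency check one should recover, for $n=2$ and $A=U(1)$, the isomorphism $\Br^2(G)\cong\Hom(H_{02}(G,\mathbb Z),U(1))$ — the $\Ext$ term vanishing since $U(1)$ is injective — in agreement with Theorem \ref{thm:bog} under the standard identification of $B_0(G)$ with $H_{02}(G,\mathbb Z)$.
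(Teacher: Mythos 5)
Your proposal is correct and follows essentially the same route as the paper: both derive \eqref{eq:uctuntwist} from the classical UCT by observing that $Z_{0n}\subseteq Z_n$ and that $\Br^n(G,A)$ is exactly the preimage under $\varrho$ of the homomorphisms killing the image of $Z_{0n}$. The only differences are cosmetic — you verify the cycle identity $\partial_n\alpha(\mathbf{g})=0$ by direct cancellation where the paper cites \cite[p.~36]{Bro82}, and you spell out the induced splitting $\phi\mapsto s(\phi\circ q)$ where the paper simply invokes the splitting of the UCT.
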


\begin{proof}
  As $A$ is a trivial $G$-module, the Universal Coefficient Theorem (UCT) yields a short exact sequence \cite[p. 60]{Bro82}
\begin{equation}
    0 \longrightarrow 
\Ext_{\mathbb Z}^{1}\!\bigl(H_{n-1}(G,\mathbb Z),A\bigr)
\ \xrightarrow{\ \iota\ }\ 
H^{n}(G,A)
\ \xrightarrow{\ \varrho\ }\ 
\Hom\!\bigl(H_{n}(G,\mathbb Z),A\bigr)
\longrightarrow 0 .
\label{eq:uct}
\end{equation}
that splits (non-canonically). 


The maps $\iota$ and $\varrho$ in \eqref{eq:uct} can be described as follows. A cohomology class $[\omega]\in H^n(G,A)$ can be represented with $\omega\in \Hom(C_n,A)$ satisfying $\omega\partial_{n+1}=0$. Then we have a well-defined evaluation map $\varrho([\omega]):Z_n/B_n\to A$ given by $\varrho([\omega])(z+B_n)=\omega(z)$. Specifically, if
\[ 
z=\sum_k\alpha_k[g_{1k}|\dots |g_{nk}], \, \hbox{ where }\alpha_k\in\mathbb{Z},
\]
then 
\[ 
\varrho([\omega])(z+B_n)=\sum_k \alpha_k\omega(g_{1k},\dots,g_{nk}).
\]
To define $\iota$, note that the long $\Ext$-sequence associated to $0\to B_{n-1}\to Z_{n-1}\to H_{n-1}\to 0$ gives an exact sequence
\[
\Hom(Z_{n-1},A)\xrightarrow{\ \res \ } \Hom(B_{n-1},A)\longrightarrow \Ext_\mathbb{Z}^1(H_{n-1},A)\longrightarrow 0. 
\]

Thus, $\Ext_\mathbb{Z}^1(H_{n-1},A)$ can be identified with the quotient of $\Hom(B_{n-1},A)$ modulo the image of the restriction map $\res: \Hom(Z_{n-1},A)\to \Hom(B_{n-1},A)$. With this identification, the map $\iota$ is given by
$\iota (\varphi +\im\res)=[\varphi\partial_n]$.


  At first note that $Z_{0n}$ is contained in $Z_n$ by \cite[p. 36]{Bro82}.
  Take an arbitrary element of $\Ext_{\mathbb Z}^{1}\!\bigl(H_{n-1}(G,\mathbb Z),A\bigr)$ and represent it as $\varphi +\im\res$ as above. Then $\iota (\varphi+\im\res)=[\varphi\partial_n]\in H^n(G,A)$. We claim that $[\varphi\partial_n]$ actually belongs to $\Br^n(G, A)$. To this end, take an arbitrary commuting $n$-tuple $(g_1,\dots ,g_n)\in X_n(G)$. Then
  \[ \sum_{\sigma\in S_n}\sgn\sigma\cdot \varphi\partial_n(g_{\sigma(1)},\dots, g_{\sigma(n)})=
  \varphi\partial_n\left ( 
    \sum_{\sigma\in S_n}\sgn\sigma\cdot [g_{\sigma(1)}|\dots |g_{\sigma(n)}]
  \right ) = 0,\]
  as required.


  Given $[\omega]\in\Br^n(G,A)$ and a commuting tuple $(g_1,\dots ,g_n)$, we note that
  \[ 
  \varrho ([\omega])\left (
    \sum_{\sigma\in S_n}\sgn\sigma\cdot [g_{\sigma(1)}|\dots |g_{\sigma(n)}]+B_n
  \right )=\sum_{\sigma\in S_n}\sgn\sigma\cdot \omega (g_{\sigma(1)}, \dots ,g_{\sigma(n)})=0. 
  \]
This shows that $\varrho([\omega])$ induces a homomorphism $H_{0n}(G,\mathbb{Z})\to A$. It follows that $\varrho$ induces an epimorphism $\tilde{\varrho}: \Br^n(G,A)\to \Hom(H_{0n}(G,\mathbb{Z}),A)$  such that the following diagram commutes:
\begin{equation*}
\xymatrix{H^n(G,A)\ar[r]^\varrho & \Hom(H_n(G,\mathbb{Z}),A)\\
\Br^n(G,A)\ar[r]^{\tilde{\varrho}}\ar@{^{(}->}[u] & \Hom (H_{0n}(G,\mathbb{Z}),A)\ar[u]_{\pi^*}
}
\end{equation*}
Here $\pi^*$ is induced by the epimorphism $\pi: H_n(G,\mathbb{Z})\to H_{0n}(G,\mathbb{Z})$. It is easy to see that
$\ker\tilde{\varrho}=\im\iota$. This concludes the proof by the UCT.
\end{proof}

As $U(1)$ is a divisible abelian group, we have that $\Ext_\mathbb{Z}^1(H_{n-1}(G,\mathbb{Z}),U(1))=0$ for a finite group $G$. Note that the UCT, in particular, implies that if $G$ is a finite group, then $H^n(G,U(1))\cong \Hom(H_n(G,\mathbb{Z}),U(1))$. 
In the same way, Theorem \ref{thm:UCTuntwist} gives that
$\Br^n(G)\cong \Hom(H_{0n}(G,\mathbb{Z}),U(1)).$ This yields an isomorphism 
\[ \Br^n(G)\cong H_{0n}(G,\mathbb{Z}). \]
This can be applied to compute the abelian invariants of $\Br^n(G)$ using the functionality of \texttt{HAP} \cite{HAP}. We elaborate on the algorithm in Appendix \ref{sec:calculations}. 


Another way of computing $\Br^n(G)$ would be to directly compute the untwisted cocycles. This is also described in Appendix \ref{sec:calculations}, and the code is available \cite{MGit}. This also provides a way of computing the DW partition function explicitly. It should be noted, however, that \texttt{HAP} currently only supports 2-cocycles and 3-cocycles. We note that \texttt{HAP} already has a built-in function \texttt{DijkgraafWittenInvariant} that computes the DW partition function for a given $3$-manifold $M$ and a finite group $G$.


As in the case of 2D discrete torsion, it is evident that $G$ admits untwisted disrete torsion of dimension $n$ if and only if $H_{0n}(G,\mathbb{Z})$ is non-trivial. Also, $G$ is $n$-untwisted if and only if $H_{0n}(G,\mathbb{Z})=H_n(G,\mathbb{Z})\neq 0$.


Given a group $G$, define
\[ 
\Sha _n(G)=H_n(G,\mathbb{Z})/I_n(G),
\]
where $I_n(G)$ is the subgroup of $H_n(G,\mathbb{Z})$ generated by all the images of the induced maps $H_n(B,\mathbb{Z})\to H_n(G,\mathbb{Z})$, where $B$ runs through all abelian subgroups of $G$. By a standard duality argument, we have that $\Hom(\Sha_n(G),U(1))$ is isomorphic to
$$\Sha^n(G,U(1))=\ker \res: H^n(G,U(1))\to \bigoplus_{\substack{B\le G\\ B \hbox{\tiny  abelian}}} H^n(B,U(1)),$$
where $\res$ is the usual cohomological restriction map.
By a result of Bogomolov \cite{Bog88}, we have that $\Br^2(G)=\Sha^2(G)$, and hence $\Sha_2(G)\cong H_{02}(G,\mathbb{Z})$. In higher dimensions we have the following:

\begin{theorem}
  \label{thm:sha}
  Let $G$ be a finite group. Then $\Sha^n(G)\subseteq \Br^n(G)$ for all $n\ge 2$. Therefore, $\Sha_n(G)$ is a quotient of $H_{0n}(G,\mathbb{Z})$.
\end{theorem}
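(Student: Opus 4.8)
The plan is to show that every class in $\Sha^n(G)$ satisfies the defining vanishing condition for $\Br^n(G)$, and then deduce the statement about $\Sha_n(G)$ by dualizing. So let $[\omega]\in\Sha^n(G)$, meaning that $\res^G_B[\omega]=0$ in $H^n(B,U(1))$ for every abelian subgroup $B\le G$. Fix a commuting tuple $\mathbf g=(g_1,\dots,g_n)\in X_n(G)$, and let $B=\langle g_1,\dots,g_n\rangle$; this is an abelian subgroup of $G$ since the $g_i$ pairwise commute. The key observation is that $W_\omega(\mathbf g)$, as given by the symmetric product formula \eqref{eq:W-symmetric}, is computed entirely from values $\omega(g_{\sigma(1)},\dots,g_{\sigma(n)})$ with all arguments in $B$; equivalently, in the homological language of Theorem~\ref{thm:UCTuntwist}, the cycle $z_{\mathbf g}=\sum_{\sigma\in S_n}\sgn\sigma\cdot[g_{\sigma(1)}|\cdots|g_{\sigma(n)}]$ lies in the image of $C_n(B)\to C_n(G)$ and is itself a cycle in $C_n(B)$ (this uses $[p.\ 36]{Bro82}$, already invoked in the previous proof). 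Hence $W_\omega(\mathbf g)=\langle\varrho[\omega],z_{\mathbf g}+B_n\rangle$ factors through the restriction: it equals the value of $\varrho(\res^G_B[\omega])$ on the image of $z_{\mathbf g}$ in $H_n(B,\mathbb Z)$. Since $\res^G_B[\omega]=0$, we get $W_\omega(\mathbf g)=1$. As $\mathbf g$ was arbitrary, $[\omega]\in\Br^n(G)$, proving $\Sha^n(G)\subseteq\Br^n(G)$.

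For the second assertion, I use the isomorphisms established just before the theorem. For finite $G$, Pontryagin duality $\Hom(-,U(1))$ is exact and converts the inclusion $\Sha^n(G)\subseteq\Br^n(G)$ of subgroups of $H^n(G,U(1))\cong\Hom(H_n(G,\mathbb Z),U(1))$ into a surjection on the homology side. Concretely, $\Br^n(G)\cong\Hom(H_{0n}(G,\mathbb Z),U(1))$ by Theorem~\ref{thm:UCTuntwist} (with $\Ext$-term vanishing as $U(1)$ is divisible), and $\Sha^n(G)\cong\Hom(\Sha_n(G),U(1))$ by the standard duality argument recalled in the excerpt. The inclusion $\Sha^n(G)\hookrightarrow\Br^n(G)$ dualizes to a surjection $H_{0n}(G,\mathbb Z)\twoheadrightarrow\Sha_n(G)$; since both groups are finite abelian, this exhibits $\Sha_n(G)$ as a quotient of $H_{0n}(G,\mathbb Z)$, as claimed. (Equivalently, one checks directly that $I_n(G)$, generated by images of $H_n(B,\mathbb Z)$ over abelian $B$, contains $Z_{0n}$ modulo $B_n$, since each $z_{\mathbf g}$ comes from an abelian subgroup — this gives the quotient map at the homology level without invoking duality.)

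The main obstacle I anticipate is the first step: making precise that the symmetric cocycle combination $W_\omega(\mathbf g)$ is genuinely a restriction phenomenon, i.e.\ that $z_{\mathbf g}$ is a cycle already in $C_n(B)$ and that pairing $\omega$ against it only sees $\res^G_B[\omega]$. The cycle property is exactly the content of the computation $\partial_n z_{\mathbf g}=0$ carried out in the proof of Theorem~\ref{thm:UCTuntwist} (it is valid verbatim inside $B$, since all entries lie in $B$), and functoriality of $\varrho$ under the inclusion $B\hookrightarrow G$ handles the rest; still, one should state this compatibility cleanly. Everything else — divisibility of $U(1)$, exactness of Pontryagin duality on finite groups, and the already-quoted identifications of $\Br^n$ and $\Sha^n$ with $\Hom$-groups — is routine and can be cited from the material preceding the theorem.
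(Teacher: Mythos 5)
Your proposal is correct and follows essentially the same route as the paper: restrict $[\omega]$ to the abelian subgroup $B=\langle g_1,\dots,g_n\rangle$, observe that the alternating sum $z_{\mathbf g}$ is already a cycle in $C_n(B)$ (Brown, p.~36), and conclude that the pairing vanishes since the restriction is a coboundary. The paper leaves the dualization step for the second assertion implicit, whereas you spell it out (both via Pontryagin duality and via the containment $\pi(Z_{0n})\subseteq I_n(G)$); this is a welcome elaboration, not a different argument.
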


\begin{proof}
  Take $[\omega]\in\Sha^n(G)$ and $\mathbf{g}=(g_1,\dots ,g_n)\in X_n(G)$ an arbitrary commuting $n$-tuple. Then $B=\langle g_1,\dots ,g_n\rangle$ is an abelian subgroup of $G$. It follows that the restriction of $\omega$ to $B$ belongs to $B^n(G,U(1))$. But then
  $$\prod_{\sigma\in S_n}\omega(g_{\sigma(1)},\dots, g_{\sigma(n)})^{\sgn \sigma}=1.$$
  As this holds for every $\mathbf{g}\in X_n(G)$, we deduce that $[\omega]\in \Br^n(G)$.
\end{proof}

An example showing that the inclusion $\Sha^n(G)\subseteq \Br^n(G)$ may be strict is given in Subsection \ref{sub:highdimexamples}. On the other hand, $\Sha^n(G)$ is less challenging to compute (see Subsection \ref{sub:highdimexamples}), and it thus serves as a good indication of non-triviality of $\Br^n(G)$.

\section{Examples}
\label{sec:examples}

\indent
In this section we report on some computational examples that were obtained. All the calculations were perfomed on Macbook M1 Pro with 16 GB RAM. The code is publicly available \cite{MGit}.

\subsection{Untwisted groups}
\label{sub:untwistexamples}

\noindent
We have showed in Section \ref{sec:discrete} that finite subgroups of $\SU(3)$ have trivial Bogomolov multipliers.
We can use the computational tools to deal with the finite subgroups of $SU(4)$ described in \cite{HH01}.
We compute  the second integral homology and Bogomolov multipliers of the groups I* -- XXX*.\footnote{We point out that table of the groups XXII* -- XXX* in \cite{HH01} contains a typo; The matrices $R$, $S$ and $T$ should be replaced by $R'$, $S'$ and $T'$, respectively.}
 It turns out that these groups always have trivial $\Br^2$.


We have also performed some calculations with groups XXXI* -- XXXIV*, which are actually families depending on parameters.\footnote{Note that \cite{HH01} contains two families with the same name XXXIII*.} For lots of choices of these, the Bogomolov multiplier is always trivial. We have not found any case with $\Br^2(G)$ non-trivial for a finite subgroup $G$ of $\SU(4)$, and we conjecture this never happens.

We list some cases of untwisted groups, that is, groups $G$ with $\Br^2(G)=H^2(G,U(1))\neq 1$. In \texttt{GAP} \cite{GAP4} notation for small groups, the smallest such group is $[64, 182]$ ($182$nd group in the library of all groups of order $64$). This is one of the eight groups of order 64 with non-trivial Bogomolov multiplier. There are two groups of order $3^5$ that are untwisted, namely $[243,29]$ and $[243,30]$. There are 5 untwisted groups of order $5^5$. These are $[ 3125, 34 ]$, $[ 3125, 35 ]$, $[ 3125, 36 ]$, $[ 3125, 37 ]$, $[ 3125, 38 ]$. Similarly, the groups $[ 16807, 38 ]$, $[ 16807, 39 ]$, $[ 16807, 40 ]$, $[ 16807, 41 ]$ and  $[ 16807, 42 ]$ are the utwisted groups of order $7^5$.
 We collect some statistics on other untwisted groups in Table \ref{tab:untwisted}.

\begin{table}
    \centering
    \begin{tabular}{|c|c|c|}
        \hline
        Order & Nr all & Nr untwisted \\
        \hline
        $2^7$ & 2328 & 16 \\
        $2^6\cdot 3$ & 1543 & 6\\
        $2^6\cdot 5$ & 1640 & 6\\
        $2^6\cdot 7$ & 1396 & 6\\
        $2^6 \cdot 3^2$ & 8681 & 26\\
        $3^6$ & 504 & 14 \\
        $3^6\cdot 2$ & 1798 & 46 \\
        $3^7$ & 9310 & 153\\
        $5^6$ & 684 & 38 \\
        \hline
    \end{tabular}
    \caption{\label{tab:untwisted} Untwisted groups.}
\end{table}

\subsection{Higher dimensional discrete torsion}
\label{sub:highdimexamples}

\noindent
\paragraph{\bf Basic examples}
We first consider the cyclic group $\mathbb{Z}/m$ of order $m$. It is known that $H_n(\mathbb{Z}/m,\mathbb{Z})$ is zero for even $n>0$, and isomorphic to $\mathbb{Z}/m$ for all odd $n$. By definition, it is straighforward to verify that $H_{0n}(\mathbb{Z}/m,\mathbb{Z})=H_n(\mathbb{Z}/m,\mathbb{Z})$ for all $n>0$. This shows that cyclic groups are $n$-untwisted for all odd $n$.


Consider the group $(\mathbb{Z}/2)^3$. The calculations are given in the Table \ref{tab:C23}. We observe that $(\mathbb{Z}/2)^3$ is not $3$-untwisted, yet it is both $4$-untwisted and $5$-untwisted. The results also show, for example, that $\Br^3$ does not in general commute with direct products, as opposed to $\Br^2$ \cite{Kang}.

\begin{table}
    \centering
    \begin{tabular}{|c|c|c|c|c|}
        \hline
        $n$ & $H_n(G,\mathbb{Z})$ & $H_{0n}(G,\mathbb{Z})$ & $H_n$ time [ms] & $H_{0n}$ time [ms] \\
        \hline
        2 & $(\mathbb{Z}/2)^3$ & 0 & 15 & 14\\
        3 & $(\mathbb{Z}/2)^7$ & $(\mathbb{Z}/2)^6$ & 13 & 121\\
        4 & $(\mathbb{Z}/2)^8$ & $(\mathbb{Z}/2)^8$& 18 & 5966\\
        5 & $(\mathbb{Z}/2)^{13}$ & $(\mathbb{Z}/2)^{13}$& 27 & 507232\\
        \hline
    \end{tabular}
    \caption{\label{tab:C23} Computations for $G=(\mathbb{Z}/2)^3$.}
\end{table}


Next we consider symmetric groups. These have trivial Bogomolov multiplier \cite{Kun}, so we focus on the dimensions 3 and 4, as the Bogomolov multipliers of alternating groups are always trivial \cite{Kun}. Table \ref{tab:Sm3} Table \ref{tab:Sm4} indicate that small symmetric groups are not $3$-untwisted, but they are $4$-untwisted. The alternating groups behave quite differently, similar computations show that $A_m$, $3\le m\le 7$ are $3$-untwisted. On the other hand, $H_4(A_m,\mathbb{Z})=0$ for $m\le 7$; the group $A_8$ is the smallest alternating group with non-trivial $4$-th homology. Note that $\Br^2(A_m)$ is trivial by \cite{Kun}.


Computations with dihedral groups $D_{2m}$, $m\le 12$, show that these groups are $3$-untwisted, and $4$-untwisted when $m$ is even; note that $H_4(D_{2m},\mathbb{Z})=0$ for $m$ odd. Bogomolov multipliers of dihedral groups are always trivial by Proposition \ref{prop:bog}. If $m\le 6$, the computations show that $D_{2m}$ is $5$-untwisted. The calculations can be pushed to show that $D_8$ is $6$-untwisted (around 7 seconds of CPU time) and $7$-untwisted (30 minutes of CPU time). 

\begin{table}
    \centering
    \begin{tabular}{|c|c|c|c|c|}
        \hline
        $m$ & $H_3(S_m,\mathbb{Z})$ & $H_{03}(S_m,\mathbb{Z})$ & $H_3$ time [ms] & $H_{03}$ time [ms] \\
        \hline
        3 & $\mathbb{Z}/6$ & $\mathbb{Z}/6$ & 1 & 75\\
        4 & $\mathbb{Z}/6\oplus \mathbb{Z}/4$ & $\mathbb{Z}/6\oplus \mathbb{Z}/4$ & 30 & 29\\
        5 & $\mathbb{Z}/6\oplus \mathbb{Z}/4$ & $\mathbb{Z}/6\oplus \mathbb{Z}/4$ & 42 & 400\\
        6 & $\mathbb{Z}/2\oplus \mathbb{Z}/4\oplus \mathbb{Z}/6$ & $\mathbb{Z}/4\oplus \mathbb{Z}/6$ & 84 & 6077\\
        7 & $\mathbb{Z}/2\oplus \mathbb{Z}/4\oplus \mathbb{Z}/6$ & $\mathbb{Z}/4\oplus \mathbb{Z}/6$ & 123 & 123878\\
        \hline
    \end{tabular}
    \caption{\label{tab:Sm3} Computations for $G=S_m$, dimension 3.}
\end{table}

\begin{table}
    \centering
    \begin{tabular}{|c|c|c|c|c|}
        \hline
        $m$ & $H_4(S_m,\mathbb{Z})$ & $H_{04}(S_m,\mathbb{Z})$ & $H_4$ time [ms] & $H_{04}$ time [ms] \\
        \hline
        3 & 0 & 0 & 2 & 25\\
        4 & $\mathbb{Z}/2$ & $\mathbb{Z}/2$ & 21 & 361\\
        5 & $\mathbb{Z}/2$ & $\mathbb{Z}/2$ & 24 & 6187\\
        6 & $(\mathbb{Z}/2)^2$ & $(\mathbb{Z}/2)^2$ & 95 & 103905\\
        \hline
    \end{tabular}
    \caption{\label{tab:Sm4} Computations for $G=S_m$, dimension $4$.}
\end{table}


\paragraph{\bf Finite subgroups of $SU(4)$}
We now focus on some finite subgroups $G$ of $SU(4)$ which are listed in \cite{HH01}. 
For these, we compute $H_3(G,\mathbb{Z})$ and $H_{03}(G,\mathbb{Z})$. We collect the results in Table \ref{tab:Su4H3}. The table is not complete due to performance obstacles, see Appendix \ref{sec:calculations}.

The computations show a startling difference between dimensions 2 and 3. While there are no known subgroups $G$ of $SU(4)$ with non-trivial Bogomolov multiplier, there several cases that are $3$-untwisted. The group $\Br^3(G)$ is usually non-trivial and very close to $H^3(G,U(1))$.

\begin{table}
    \centering
    \begin{tabular}{|c|c|c|c|}
        \hline
        $G$ & $|G|$ & $H_3(G,\mathbb{Z})$ & $H_{03}(G,\mathbb{Z})$ \\
        \hline
       I* & 240 & $\mathbb{Z}/30\oplus \mathbb{Z}/8$ & $\mathbb{Z}/30\oplus \mathbb{Z}/8$\\
       II* & 60 & $\mathbb{Z}/30$ & $\mathbb{Z}/30$ \\
        VIII* & 480 & $\mathbb{Z}/12\oplus \mathbb{Z}/8$ & $\mathbb{Z}/12\oplus \mathbb{Z}/8$\\
        X* & 288 & $(\mathbb{Z}/3)^2\oplus\mathbb{Z}/6\oplus \mathbb{Z}/8$ & $(\mathbb{Z}/3)^2\oplus\mathbb{Z}/6\oplus \mathbb{Z}/4$\\
      XXXI*(1) & 12 & $\mathbb{Z}/6$ & $\mathbb{Z}/6$ \\
      XXXI*(2) & 96 & $(\mathbb{Z}/2)^3\oplus \mathbb{Z}/3\oplus \mathbb{Z}/8$ & $(\mathbb{Z}/2)^3\oplus \mathbb{Z}/3\oplus \mathbb{Z}/4$\\
      XXXI*(3) & 324 & $\mathbb{Z}/2\oplus (\mathbb{Z}/3)^3$ & $\mathbb{Z}/2\oplus (\mathbb{Z}/3)^2$\\
      XXXII*(1) & 24 & $\mathbb{Z}/2\oplus \mathbb{Z}/3\oplus \mathbb{Z}/4$ & $\mathbb{Z}/2\oplus \mathbb{Z}/3\oplus \mathbb{Z}/4$ \\
      XXXII*(2) & 192 & $(\mathbb{Z}/2)^2\oplus \mathbb{Z}/3\oplus (\mathbb{Z}/4)^3$ & $(\mathbb{Z}/2)^2\oplus \mathbb{Z}/3\oplus (\mathbb{Z}/4)^3$ \\
      XXXIII*(1) & 8 & $(\mathbb{Z}/2)^2\oplus \mathbb{Z}/4$ & $(\mathbb{Z}/2)^2\oplus \mathbb{Z}/4$\\
      XXXIII*(2) & 64 & $(\mathbb{Z}/2)^5\oplus (\mathbb{Z}/4)^3$ & $(\mathbb{Z}/2)^4\oplus (\mathbb{Z}/4)^3$\\
      XXXIII*(3) & 216 & $(\mathbb{Z}/6)^2\oplus \mathbb{Z}/4$ & $(\mathbb{Z}/6)^2\oplus \mathbb{Z}/4$\\
      XXXIII*(1) & 4 & $(\mathbb{Z}/2)^3$ & $(\mathbb{Z}/2)^3$\\
      XXXIII*(2) & 32 & $(\mathbb{Z}/2)^9\oplus \mathbb{Z}/8$ & $(\mathbb{Z}/2)^9\oplus \mathbb{Z}/4$\\
      XXXIII*(3) & 108 & $(\mathbb{Z}/2)^3\oplus (\mathbb{Z}/3)^4$ & $(\mathbb{Z}/2)^3\oplus (\mathbb{Z}/3)^3$\\
        \hline
    \end{tabular}
    \caption{\label{tab:Su4H3} Computations for $G\le \SU(4)$, dimension $3$.}
\end{table}


\paragraph{\bf The group $\Sha^n(G)$ vs $\Br^n(G)$}
 The group $\Sha_n(G)$ introduced in Section \ref{sec:highdim} can be computed in \texttt{HAP} using \texttt{Bogomology(G, n)}. By Theorem \ref{thm:sha}, its dual $\Sha^n(G)$ is always containes in $\Br^n(G)$ We show that this inclusion may be strict. Take $G$, for example, to be \texttt{SmallGroup(64, 182)}. Then the computations show that $\Sha_(G)\cong\mathbb{Z}/2$ and $H_{03}(G,\mathbb{Z})\cong \mathbb{Z}/2\oplus (\mathbb{Z}/8)^2$.


\section{Conclusions}
\label{sec:conclusions}

\noindent
We introduced and studied the groups $\Br^n(G)\subseteq H^n(G,U(1))$ consisting of
cohomology classes whose Dijkgraaf--Witten weights are trivial on all commuting
$n$--tuples, i.e.\ on all flat $G$--bundles over the $n$--torus.
Conceptually, $\Br^n(G)$ captures the part of group cohomology that is invisible to torus
backgrounds. Operationally, it measures the failure of $H^n(G,U(1))$ to parametrize
physically distinct discrete-torsion deformations at the level of torus amplitudes.
For finite $G$ we identified $\Br^n(G)$ with $\Hom(H_{0n}(G,\mathbb Z),U(1))$ and hence
with an explicit quotient $H_{0n}(G,\mathbb Z)$ of $H_n(G,\mathbb Z)$, providing a purely
homological handle on this ``torus-invisible'' sector.


In degree $2$ this recovers the familiar Bogomolov multiplier/unramified Brauer group and
its efficient dual description.
Our computations indicate a rigidity for the finite subgroups of $\SU(4)$ in the
Hanany--He classification \cite{HH01}. Despite non-trivial Schur multipliers occurring frequently, the
Bogomolov multipliers we tested are trivial, so there is no genuine two-dimensional
discrete torsion visible on the torus in these examples.
In higher degrees the behavior is richer. The groups $H_{0n}(G,\mathbb Z)$ need not
coincide with $H_n(G,\mathbb Z)$, and we observe examples where untwistedness depends
sensitively on $n$.
This highlights that higher-dimensional discrete torsion is not merely a repackaging of
ordinary group cohomology, but is constrained by the geometry of commuting tuples.


We also compared $\Br^n(G)$ with the invariants $\Sha^n(G)$ obtained by
imposing trivial restrictions on all abelian subgroups.
We proved the general inclusion $\Sha^n(G)\subseteq \Br^n(G)$ and found evidence that it
can be strict.
From the physical perspective, requiring triviality on all abelian subgroups is stronger
than triviality of the torus DW holonomy, so $\Sha^n(G)$ may underestimate the
``invisible'' sector relevant to $T^n$.
From the computational perspective, $\Sha_n(G)$ remains a useful lower bound and a
practical diagnostic for nontrivial untwistedness.




 \appendix

 \section{Algorithms}
 \label{sec:calculations}

\subsection{Algorithm for conjugacy class representatives of commuting tuples}
\label{sub:conj}

Let $G$ be a finite group. Recall that the set of all commuting $n$-tuples is 
\[
X_n(G) = \{ (g_1, \dots, g_n) \in G^n \mid g_i g_j = g_j g_i 
\text{ for all } i,j \}.
\]
The group $G$ acts on $X_n(G)$ by \emph{diagonal conjugation},
\[
k \cdot (g_1, \dots, g_n) = (k g_1 k^{-1}, \dots, k g_n k^{-1}),
\qquad k \in G.
\]
We seek a set of orbit representatives for this action, i.e.\ a transversal
of $X_n(G)/G$, the moduli space of flat connections over $T^n$.
The representatives are obtained recursively using a centralizer-based search,
which prunes the space of possible tuples efficiently.
We choose an element $g_1 \in G$ and restrict to its centralizer 
        $C_G(g_1)$.
Then we choose $g_2 \in C_G(g_1)$ and restrict to $C_G(g_1,g_2)$. Continue this process until an $n$-tuple 
        $\mathbf{g} = (g_1,\dots,g_n)$ is obtained. Each constructed tuple automatically satisfies 
        $g_i g_j = g_j g_i$ for all $i,j$.
  Then we apply the orbit–stabilizer algorithm to eliminate duplicates under
        the conjugation action of $G$, keeping one representative per orbit.
For each orbit representative $\mathbf{g}$, its stabilizer subgroup
\[
C_G(\mathbf{g}) = \{ k \in G \mid k g_i k^{-1} = g_i \ \forall i \}
\]
is computed. Its size $|C_G(\mathbf{g})|$ follows from the
orbit–stabilizer theorem and serves as a natural weight in topological
applications.


At each step the search space is reduced to a smaller centralizer, so the
algorithm avoids testing all $|G|^n$ possible tuples. In particular, the
search depth is $n$, and at each level only elements commuting with all
previously chosen ones are explored. The final reduction by conjugation
further eliminates equivalent configurations.


\subsection{The dual of the untwisted group}
\label{sub:dualcomp}

\noindent 
We now describe an algorithm for computing the isomorphism type of the group $H_{0n}(G,\mathbb{Z})$. 


Let $C_\bullet$ be a chain complex of free abelian groups
\[
\cdots \xrightarrow{\partial_{n+1}} C_n \xrightarrow{\partial_n} C_{n-1}\xrightarrow{}\cdots,
\qquad C_k\cong \mathbb Z^{r_k},
\]
and let $Z_{0n}\le C_n$ be a subgroup generated by a finite list of vectors
$v^{(1)},\dots,v^{(s)}\in \mathbb Z^{r_n}$; these are the images of alternating bar words under a chain map. 
Recall that $B_n=\mathrm{im}(\partial_{n+1})\le C_n$ and $Z_n:=\ker(\partial_n)\le C_n$.
We need to compute the quotient abelian group
\[
Q_n \;:=\; Z_n/(B_n+Z_{0n}),
\]
where
\[ Z_{0n}=\left\langle \sum_{\sigma\in S_n}
\sgn\sigma\cdot [g_{\sigma(1)}|\dots |g_{\sigma(n)}]\mid (g_1,\dots ,g_n)\in X_n(G)
\right\rangle .
\]


\noindent
\textbf{Step 1: boundary matrices.}
Represent $\partial_n$ and $\partial_{n+1}$ by integer matrices
\[
D_n\in M_{r_{n-1}\times r_n}(\mathbb Z),\qquad
D_{n+1}\in M_{r_n\times r_{n+1}}(\mathbb Z),
\]
with respect to the standard bases of $C_k\cong \mathbb Z^{r_k}$.

\textbf{Step 2: a $\mathbb Z$-basis of $Z_n=\ker(D_n)$.}
Compute the Smith normal form of $D_n$, i.e.\ unimodular matrices
$U\in \mathrm{GL}_{r_{n-1}}(\mathbb Z)$ and $V\in \mathrm{GL}_{r_n}(\mathbb Z)$ such that
\[
U D_n V \;=\; S \;=\; \mathrm{diag}(s_1,\dots,s_r,0,\dots,0),
\qquad r=\mathrm{rank}(D_n).
\]
Then $\ker(S)=\{0\}^r\times \mathbb Z^{r_n-r}$, hence the last $k:=r_n-r$ columns of $V$
form a $\mathbb Z$-basis of $Z_n=\ker(D_n)$.

Equivalently, for any $z\in Z_n$ the coordinate vector $y:=V^{-1}z$ satisfies
$y_1=\cdots=y_r=0$, and the tail
\[
\mathrm{coord}(z)\;:=\;(y_{r+1},\dots,y_{r_n})\in \mathbb Z^{k}
\]
records the coordinates of $z$ in the chosen kernel basis.

\textbf{Step 3: relations coming from $B_n+Z_{0n}$.}
Let $L:=B_n+Z_{0n}\le Z_n$.  A generating set for $L$ is obtained by taking:
(i) the columns of $D_{n+1}$ (generators of $B_n$), and
(ii) the prescribed generators $v^{(j)}$ of $Z_{0n}$.
For each generator $\ell\in L$, compute its kernel coordinates
\[
c(\ell)\;:=\;\mathrm{coord}(\ell)\in \mathbb Z^{k}.
\]
Collect these coordinate vectors as columns of an integer matrix
\[
A\in M_{k\times m}(\mathbb Z),
\]
so that $\mathrm{im}(A)\subseteq \mathbb Z^{k}$ is the subgroup of relations in kernel coordinates.

\textbf{Step 4: quotient invariants.}
By construction,
\[
Q_n \;\cong\; \mathbb Z^{k}\big/\mathrm{im}(A).
\]
Compute the Smith normal form of $A$:
\[
U' A V' \;=\; \mathrm{diag}(d_1,\dots,d_t,0,\dots,0),
\quad d_i\ge 2,\quad d_i\mid d_{i+1}.
\]
It follows that
\[
Q_n \;\cong\; \bigoplus_{i=1}^{t}\mathbb Z/d_i\mathbb Z \ \oplus\ \mathbb Z^{\,k-t}.
\]
In particular, the elementary divisors $d_i$ (together with the free rank $k-t$)
give the complete isomorphism type of $Z_n/(B_n+Z_{0n})$ as a finitely generated abelian group.


The advantage of this algorithm is that it employs integer linear algebra to avoid costly coset enumeration used in computing the quotients. The computational bottleneck is the construction and use of the chain equivalence
\(\phi_n : BC_n(G)\to T_n=\mathbb{Z}\otimes_{\mathbb{Z}G}R_n\), where $R$ is a  free \(\mathbb{Z}G\)-resolution  produced by \texttt{HAP}. The chain map $\phi_*$ can be obtained via \texttt{HAP}'s commands \texttt{BarComplexEquivalence} or \texttt{BarResolutionEquivalence}. Applying \(\phi_n\) to many alternating bar words forces repeated, expensive reductions through the chosen \(R\), and this dominates runtime. 
By contrast, generating commuting tuple representatives and the final Smith-normal-form step for \(Z_n/(B_n+Z_{0n})\) are typically negligible here. The timings indicate that the cost is overwhelmingly in evaluating \(\phi_n\) (and in practice, largely in building the underlying equivalence data and rewriting words in \(R_n\)).


Regarding the generating set of $\Z_{0n}$, note that $C_\bullet$ is the complex of coinvariants $(F_\bullet)_G$, where $F_\bullet$ is the bar resolution of $\mathbb{Z}$. That implies $1\otimes x=1\otimes (g\cdot x)$ for all $g\in G$ and $x\in F_\bullet$. The action of $G$ on $F_\bullet$ is conjugation, therefore, in our notation, $[hg_1h^{-1}|\dots |hg_nh^{-1}]=[g_1|\dots|g_n]$ for all $h,g_1,\dots,g_n\in G$. Therefore,
\[ Z_{0n}=\left\langle \sum_{\sigma\in S_n}
\sgn\sigma\cdot [g_{\sigma(1)}|\dots |g_{\sigma(n)}]\mid (g_1,\dots ,g_n)\in X_n(G)/G
\right\rangle .
\]
This, in general, yields a much smaller set of generators than running through all commuting $n$-tuples in $G$.

\subsection{Computation of cocycles with values in $U(1)$}
\label{sub:U1cocyc}

\noindent
In order to deal with the explicit elements in $H^n(G,U(1))$ and compute the phase factors, one needs to find a set of representatives of $n$-cocoycles that define $\Br^n(G)$. These are maps $G^n\to U(1)$. The first obstacle is that \texttt{GAP} does not provide functionality for computing with complex numbers in general. It does, however, handle arithmetics with roots of unity. It turns out that this suffices for our purposes. Namely, If $G$ is a finite group, Then $H^n(G,U(1))$ is canonically isomorphic to $\Hom(H_n(G,\mathbb{Z}),U(1))$. Noter that $H_n(G,\mathbb{Z})$ is a finite abelian group. Let $m$ be a positive number with $m\cdot H_n(G,\mathbb{Z})=0$. Then the above isomorphism implies that every $n$-cocycle $\omega\in Z^n(G,U(1))$ actually maps into the subgroup of $U(1)$ generated by the primitive $m$-th root of unity $\zeta_m=e^{2\pi i/m}$. Thus, one can replace $U(1)$ with the cyclic group $\mathbb{Z}/m$ of order $m$ to perform the computations with the cocycles.


While \texttt{HAP} computes $H^n(G,A)$ for finite abelian coefficient modules $A$, access to cochain-level data (explicit cocycles and their manipulation) is currently provided only for degrees $n\le 3$. Thus, we are currently limited to describing the elements of \(\Br^2(G)\) and \(\Br^3(G)\). The algorithm and its implementation that we present are nevertheless general enough to cover higher degrees once the corresponding cocycle-level functionality becomes available.


Once we have computed the cocycles with values in $\mathbb{Z}/m$, one can obtain their classes in $H^n(G,\mathbb{Z}/m)$. This group may not be isomorphic to $H^n(G,U(1))$. To make up for that, note that the UCT implies that 
\[ 
  H^n(G,\mathbb{Z}/m)\cong \Ext_\mathbb{Z}^1(H_{n-1}(G,\mathbb{Z}),\mathbb{Z}/m)\oplus
  \Hom(H_n(G,\mathbb{Z}),\mathbb{Z}/m).
\]
The second summand is isomorphic to $H_n(G,\mathbb{Z})$, as $m\cdot H_n(G,\mathbb{Z})=0$. Therefore,
\[ 
  H^n(G,\mathbb{Z}/m)\cong \Ext_\mathbb{Z}^1(H_{n-1}(G,\mathbb{Z}),\mathbb{Z}/m)\oplus
  H^n(G,U(1)).
\]
A similar argument shows that Theorem \ref{thm:UCTuntwist} implies
\[ 
  \Br^n(G,\mathbb{Z}/m)\cong \Ext_\mathbb{Z}^1(H_{n-1}(G,\mathbb{Z}),\mathbb{Z}/m)\oplus
  \Br^n(G).
\]
These show that $H^n(G,U(1))$ and $\Br^n(G)$ can be obtained by subtracting the direct summand $\Ext_\mathbb{Z}^1(H_{n-1}(G,\mathbb{Z}),\mathbb{Z}/m)$ from $H^n(G,\mathbb{Z}/m)$ and $\Br^n(G,\mathbb{Z}/m)$. The isomorphism type of the $\Ext$ part follows from the formula
\[
\Ext_\mathbb{Z}^1\left ( \bigoplus_{i}\mathbb{Z}/d_i,\mathbb{Z}/m \right ) 
\cong \bigoplus_i\mathbb{Z}/\gcd(d_i,m).
\]

\subsection{Connection with Dijkgraaf--Witten theory}
\label{sub:dw}

In the Dijkgraaf--Witten topological gauge theory for a finite group $G$
and cocycle $\omega \in H^n(G,U(1))$, field configurations on a closed
$n$-manifold $M$ correspond to flat $G$-bundles,
classified by group homomorphisms
\[
\phi \colon \pi_1(M) \to G,
\]
up to conjugation. For the $n$-torus $T^n$, one has 
$\pi_1(T^n) \cong \mathbb{Z}^n$, and therefore
$\mathrm{Hom}(\pi_1(T^n), G)$ can be identified with $X_n(G)$,
the set of commuting $n$-tuples in $G$. Two tuples related by conjugation
represent gauge-equivalent bundles, so the physically distinct configurations
are precisely the elements of the orbit space $X_n(G)/G$.

The Dijkgraaf--Witten partition function on $T^n$ takes the form
\[
Z^{\mathrm{DW}}_\omega(T^n)
  \;=\;
  \sum_{[\mathbf{g}] \in X_n(G)/G}
  \frac{W_\omega(\mathbf{g})}{|C_G(\mathbf{g})|},
\]
and there is a similar expression for $Z^{\mathrm{orb}}_\omega(T^n)$.
Here $W_\omega(\mathbf{g})$ again denotes the phase obtained by evaluating 
on the tuple $\mathbf{g}$. The recursive centralizer
algorithm described above provides an explicit finite enumeration of
all orbit representatives $\mathbf{g}$ and stabilizer sizes
$|C_G(\mathbf{g})|$, which are precisely the ingredients required to
evaluate this partition function.



\end{document}